\newtheorem{theorem}{Theorem}
\newtheorem{prop}{Proposition}
\newtheorem{corollary}{Corollary}
\def\Bbb#1{{\mathbb#1}}
\def\RR{\Bbb{R}}
\def\NN{\Bbb{N}}
\def\EE{\Bbb{E}}
\def\XX{\Bbb{X}}
\def\eps{\varepsilon}
\def\simlim{N,L\to\infty\atop{N/L\to\rho}}
\begin{document}

\begin{center}
\vspace*{5mm}

{\bf \Large Thermodynamic Limit for the Invariant Measures in Supercritical Zero Range Processes.}\\
\vspace{6mm}
In\'{e}s Armend\'{a}riz\footnote[1]{IME, Universidade de S\~ao Paulo, S\~ao Paulo, Brazil. On leave from Universidad de San Andr\'{e}s, Vito Dumas 284, B1644BID, Victoria,  Argentina. E-mail:{\tt iarmendariz@udesa.edu.ar}},\
Michail Loulakis\footnote[2]{Department of Applied Mathematics, University of Crete, and Institute of Applied and Computational Mathematics, FORTH,  Crete.\ Knossos Avenue, 714 09 Heraklion Crete, Greece. E-mail: {\tt loulakis@tem.uoc.gr}}
\end{center}
\vspace{6mm}
\small
\hspace{1cm} \begin{minipage}[t]{4.9in}
ABSTRACT: We prove a strong form of the equivalence of ensembles for the invariant measures of zero range processes conditioned to a supercritical density of particles. It is known that in this case there is a single site that accomodates a macroscopically large number of the particles in the system. We show that in the thermodynamic limit the rest of the sites have joint distribution equal to the grand canonical measure at the critical density. This improves the result of Gro\ss kinsky, Sch\"{u}tz and Spohn, where convergence is obtained for the finite dimensional marginals. We obtain as corollaries limit theorems for the order statistics of the components and for the fluctuations of the bulk.\\
\\
{\em AMS 2000 Mathematics Subject Classification}: 60K35; 82C22; 60F10 \\ 
\\
{\em Keywords}: Condensation, Equivalence of Ensembles, Large Deviations, Subexponential Distributions, Zero Range Processes.
\end{minipage}
\normalsize
\vspace{1mm}
\section{Introduction}
In a landmark paper of 1970, Spitzer \cite{Sp} introduced five particle systems undergoing simple interactions and initiated a research project to rigorously analyse their equilibrium and dynamical properties. One of the systems he proposed was the zero range process, a model in which particles leave any given site at a rate $g(k)$ that only depends on the number $k$ of particles present at the site, hence the name. The attention was initially drawn to the existence of the dynamics under general conditions, the identification of invariant measures and the establishment of the hydrodynamic limit. All these questions have been successfully addressed, at least in the attractive case when the rate function $g(\cdot)$ is increasing. A comprehensive review of these results can be found in \cite{KL}.\\
\\  
Over the last decade, there has been an increasing interest in zero range processes such that the rate $g(\cdot)$  decreases with the number of particles. This can be thought as introducing a mechanism of effective attraction between the particles, that if strong enough, i.e. when the rates decrease sufficiently fast, can lead to phenomena of condensation -- a transition to a phase where a single site contains a finite fraction of the particles in the system. This type of condensation appears in diverse contexts such as traffic jamming, gelation in networks, or wealth condensation in macroeconomies, and zero range processes or simple variants have been used as prototype models. Evans and Hanney \cite{EH} provide an excellent review on this subject.\\
\\
A phase transition in this class of zero range processes
can be already observed at the level of the invariant states. It is known \cite{JMP,GSS} that when the density of particles exceeds a critical value $\rho_c$, the invariant measures of the process concentrate on configurations where a macroscopic proportion of the total number of particles forms a randomly located cluster. In this article we analyse the thermodynamic limit of the invariant measures of the process conditioned to having a supercritical density, that is we let the number of sites $L$ and the number of particles $N$ grow to infinity in such a way that $N/L\to\rho>\rho_c$.\\
\\
Given the particle and site numbers $N$ and $L$ as above, the invariant state of the process is identified as the product of $L$ copies of a measure $\nu_{\phi_c}$ supported on the integers, conditioned to adding up to $N$. When the particle density $N/L$  is higher than $\rho_c$ we are conditioning on an atypical event, and the problem can be described as Gibbs conditioning for  a measure having no exponential moments. Gro\ss kinsky, Sch\"{u}tz and Spohn \cite{GSS} identified the typical configuration of a finite subsystem by proving an equivalence of ensembles property. Remarkably, the effect of the conditioning on the finite subsystem disappears in the thermodynamic limit. This happens because the rare event is most likely realised by a large deviation of the maximum component. A similar result was proved by Ferrari, Landim and Sisko \cite{FLS} when the number of sites is fixed while the particle number grows to infinity, and by Gro\ss kinsky \cite{G} for systems with two particle species. \\ 
\\
The fact that convergence to a product measure holds for the finite dimensional marginals is standard when the equivalence of ensembles or the Gibbs conditioning principle are satisfied.  It is crucial that the size of the subsystem amounts to a vanishing fraction of the whole. Indeed, the result often fails to hold when this is not the case (cf. Proposition 2.12 in \cite{DZ}). The main result in this article is an unusually strong form of the equivalence of ensembles. Precisely, we prove (Theorem \ref{equivalence}) that in supercritical zero range processes the effect of conditioning is entirely absorbed by the maximum component, in the sense that the joint distribution of the remaining sites converges to a product measure. We then derive some interesting corollaries from this result.\\
\\
This distinctive behavior can be attributed to the fact that the marginals $\nu_{\phi_c}$ of the unconditional distribution are subexponential. Indeed, the proof of Theorem \ref{equivalence} relies on a Local Limit Theorem in the form of equation (\ref{LLT}), a result that requires little more than subexponentiality. 

\section{Notation and results}
Zero range processes are interacting particle systems evolving on a set of sites $\Lambda$. Particles
perform random walks on $\Lambda$ interacting only with particles sitting on the same site through the 
following rule: the rate at which a particle leaves a site depends on the number of particles at
that site. Given a function $g:\ \NN_{0}=\{0,1,2,\ldots\}\mapsto\RR_{+}$ 
and a transition probability $p(\cdot,\cdot)$ on $\Lambda\times\Lambda$, the dynamics of the
process can be described as follows. If there are $k$ particles at a site $x$, then independently of the
configuration on the other sites, a particle leaves $x$ after an exponential waiting time with rate $g(k)$. 
A target site is chosen according to $p(x,\cdot)$, the particle jumps there and the process starts afresh.\\
\\
A zero range process can be rigorously defined as a Markov process on the state space
$
\XX_{\Lambda}=\NN_{0}^{\Lambda}.
$
A point $\eta$ in $\XX_{\Lambda}$ can be thought of as a configuration of particles on $\Lambda$, with 
$\eta_x$ denoting the number of particles at the site $x\in\Lambda$.
Regarding the jump rate function $g(\cdot)$ and the transition probabilities $p(\cdot,\cdot)$, we assume that
\[
g:\NN_{0}\mapsto \RR_{+} \text{ is such that } g(k)=0 \Leftrightarrow k=0,
\]
and
\[
p:\Lambda\times\Lambda\mapsto [0,1] \text{ is such that }\sum_{y\in\Lambda} p(x,y)=\sum_{y\in\Lambda} p(y,x)=1, \ \forall x\in\Lambda.
\]
In order to avoid degeneracies we further assume that the random walk on $\Lambda$ with transition probabilities
$p(\cdot,\cdot)$ is irreducible.  In this article we only consider finite sets $\Lambda$,
in which case we can define a process starting from any initial configuration $\eta\in\XX_{\Lambda}$.\\
\\
The infinitesimal generator of the zero range process is then given by
\[ 
Lf(\eta)=\sum_{x,y\in\Lambda}g\big(\eta_x\big)p(x,y)\left(f(\eta^{x,x+y})-f(\eta)\right),
\]
where
\[
\eta^{x,x+y}_z=\begin{cases}
                                \eta_z &\text{if }  z\neq x,y\\
                                \eta_x-1 &\text{if }  z=x \\
                                \eta_y+1 &\text{if }  z=y.
                           \end{cases}
\]
\\
\noindent
Zero range processes possess a family of invariant product measures with site marginals given by
\[
\nu_{\phi}\big[\eta_x=k\big]=\frac{1}{Z(\phi)}\frac{\phi^{k}}{g(k)!}\,,
\]
where $g(k)!=\prod_{m=1}^{k}g(m)$. Each of these measures is usually referred to as the grand-canonical ensemble corresponding to
the fugacity $\phi$, and they can be defined for any $\phi$ in the range of convergence of the power series
\[
Z(\phi)=\sum_{k}\frac{\phi^{k}}{g(k)!}.
\]
The expected number of particles per site is given by
\[
\rho(\phi)=\EE^{\nu_{\phi}}\big[\eta_x\big]=\frac{1}{Z(\phi)}\sum_{k=1}^{\infty}k\frac{\phi^{k}}{g(k)!}.
\]
It can be easily verified that $\rho$ is a strictly increasing function of $\phi$.\\
\\
Let $\phi_{c}\le+\infty$ denote the radius of convergence of $Z(\phi)$. If $Z(\phi_{c}):=\lim_{\phi\uparrow\phi_{c}}Z(\phi)=\infty$,
it can be proved \cite{KL} that $\rho_{c}:=\lim_{\phi\uparrow\phi_{c}}\rho(\phi)=\infty$.
If on the other hand $Z(\phi_{c})$ is finite, it is possible that $\rho_{c}$ is also finite. In this case none of the grand-canonical measures 
corresponds to a particle density higher than the critical $\rho_{c}$, and the system undergoes a phase transition \cite{JMP,GSS} from a fluid to
a condensed phase, in a sense to be made precise later. \\
\\
To fix ideas, we consider here a reference model such that both $Z(\phi_c)$ and $\rho_c$ are finite that was originally proposed by Evans \cite{E}. In the last section we discuss how
our results apply to a number of other systems with finite critical density. \\
\\
 In Evans' model the jump rates are given by
\begin{equation}
g(k)=\begin{cases}1+\frac{b}{k} &\text{ if } k\ge 1\\
                            0   &\text{ if } k=0.
       \end{cases}
\label{rates}
\end{equation}
With this choice of $g$, one gets
\[
g(k)!=\frac{\Gamma(b+k+1)}{\Gamma(b+1)k!}\sim \frac{k^{b}}{\Gamma(b+1)},
\]
if $\Gamma(\cdot)$ denotes the standard Gamma function. The critical fugacity $\phi_{c}$ is equal to 1, the partition function 
$Z(\phi)$ is finite at $\phi_{c}$ if $b>1$, and the critical density $\rho_{c}$ is finite if $b>2$. Since we are interested in systems with
finite critical density we will assume throughout this article that $b>2$. \\
\\
Due to the conservation of the number of particles by the dynamics, the state space is partitioned into 
finite invariant subspaces, where 
\[
S_{L}(\eta)=\sum_{x\in\Lambda}\eta_x
\]
is constant: $\XX_{\Lambda,N}=\{\eta \in \XX_{\Lambda}: S_{L}(\eta)=N\}.$
On each of these subspaces the zero range process is irreducible and has a unique invariant measure which
we denote by $\mu^{N,L}$. We will refer to the measures $\mu^{N,L}$ as the canonical ensembles. They can be explicitly computed, 
but they can also be obtained by conditioning the grand-canonical ensembles on the total number of particles. That is
\[
\mu^{N,L}\big[\cdot\big]=\nu_{\phi}^{L}\big[\cdot\ \big|\ S_{L}(\eta)=N\big].
\]
Note that the right hand side of the last equation does not actually depend on $\phi$. 
A natural object of interest is the behavior of these measures in the thermodynamic limit, as $N,L\to\infty$
in such a way that the average particle density $N/L$ converges to a constant $\rho$. \\
\\
When $\rho<\rho_c$ there exists a fugacity $\phi$ such that $\rho=\rho(\phi)$ and the standard equivalence of ensembles for independent random variables holds \cite{KL}. That is, the finite dimensional marginals of the canonical ensembles $\mu^{N,L}$ converge to  the grand-canonical ensemble corresponding to fugacity $\phi$. The equivalence of ensembles for (super)critical densities $(\rho\ge\rho_c)$ was established by Gro\ss kinsky, Sch\"utz and Spohn \cite{GSS}. Using relative entropy methods they prove convergence of the finite dimensional marginals of $\mu^{N,L}$ to the grand-canonical ensemble at critical fugacity.\\
\\
Furthermore, it has been proved \cite{JMP,GSS, G} that when the density is supercritical a condensation phenomenon emerges. Precisely, if $\rho>\rho_c$ and $\eps>0$ then
\begin{equation}
\lim_{\simlim}\mu^{N,L}\left[\frac{1}{L}\max_{x\in\Lambda}\eta_x>\rho-\rho_c-\eps\right]=1.
\label{weaklaw}
\end{equation}
This is to be contrasted with the size of the largest component in the case below criticality, which is of order $\log(L)$ \cite{JMP}. The comparison gives a precise meaning to the phase transition experienced by the system, and is reminiscent of the Erd\"os-Renyi results on the largest cluster of a random graph.\\
\\
The heuristic picture suggests that at supercritical densities the bulk of the sites is distributed according to independent copies of $\nu_{\phi_c}$, while a single randomly located site accumulates all the excess mass. The results mentioned above do not fully justify this picture however, because convergence to the grand-canonical ensembles is only obtained at the level of finite dimensional marginals. Hence, questions that require knowledge of the full limiting distribution of the bulk cannot be addressed directly. Such questions include for example the fluctuations of the bulk density around $\rho_c$,  the fluctuations of the maximum around $(\rho-\rho_c)L$, or the size of the second largest component.\\
\\
The contribution of this paper is a strong version of the equivalence of supercritical ensembles that provides a complete description for the thermodynamic limit and justifies the aforementioned picture.  
Precisely, if $\eta\in\XX_\Lambda$ is a configuration of particles on $\Lambda$ we define
\[
M_{L}(\eta)=\max_{x\in\Lambda}\eta_{x}
\]
and let $m_{L}(\eta)=\text{argmax}(\eta)$ be the position where the maximum occurs. We can always enumerate the sites of $\Lambda=\{x_1,\ldots,x_L\}$ and define $m_L(\eta)$ to be the site with the smallest index should the maximum occur more than once.\\
\\
We also define
\[
(\sigma^{y,z}\eta)_{x}=\begin{cases}\eta_{x} & \text{ if } x\neq y,z,\\
                                                                \eta_y & \text{ if } x=z,\\
                                                                \eta_z & \text{ if } x=y,
                                        \end{cases}
\]
\noindent
and the operator $T:\XX_{\Lambda}\longrightarrow\XX_{\Lambda}$ with $T\eta=\sigma^{x_L,m_L(\eta)}\eta$ that exchanges the last and the maximum component of $\eta$.\\
\\
We are ready to state the main result.
\begin{theorem} Let ${\cal F}_{L}$ be the $\sigma$-field generated by $\eta_{x_1},
\ldots\eta_{x_L}$. If $\rho>\rho_{c}$, then
\[
\lim_{\simlim}\sup_{A\in{\cal F}_{L-1}}\big|\mu^{N,L}\circ T^{-1}\big[A\big]-\nu_{\phi_{c}}^{L-1}\big[A\big]\big|= 0.
\]
\label{equivalence}
\end{theorem}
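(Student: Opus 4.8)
The plan is to reduce the total variation distance in the statement to a single scalar ratio and then control that ratio with the local limit theorem (\ref{LLT}). Write $p(k)=\nu_{\phi_c}[\eta_x=k]$ and $p^{*L}(N)=\nu_{\phi_c}^L[S_L=N]$, so that $\mu^{N,L}[\eta]=p^{*L}(N)^{-1}\prod_{x}p(\eta_x)$, and set $S_{L-1}=\sum_{i=1}^{L-1}\eta_{x_i}$. Since the supremum over $A\in\mathcal F_{L-1}$ is exactly the total variation distance between the laws of $(\eta_{x_1},\dots,\eta_{x_{L-1}})$ under $\mu^{N,L}\circ T^{-1}$ and under $\nu_{\phi_c}^{L-1}$, I would first compute the former law explicitly. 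The map $T$ merely permutes coordinates, so the first $L-1$ coordinates of $T\eta$ are the $L-1$ non-maximal values of $\eta$. Fixing a target $\mathbf{a}=(a_1,\dots,a_{L-1})$ with $s=\sum_i a_i$ and $\max_i a_i< N-s$, a direct enumeration shows there are exactly $L$ preimages under this map, one placing the value $M:=N-s$ at each of the $L$ sites, and each carries the same product weight $p(M)\prod_{i=1}^{L-1}p(a_i)$. Hence, writing $\Omega^*=\{\mathbf{a}:\ \max_i a_i< N-s\}$,
\[
\mu^{N,L}\circ T^{-1}[\mathbf{a}]=\frac{L\,p(N-s)}{p^{*L}(N)}\,\nu_{\phi_c}^{L-1}[\mathbf{a}]=:R_L(\mathbf{a})\,\nu_{\phi_c}^{L-1}[\mathbf{a}],\qquad \mathbf{a}\in\Omega^*,
\]
while $\mu^{N,L}\circ T^{-1}$ is supported on $\Omega^*$ together with the tie boundary $\{\max_i a_i=N-s\}$.

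Second, I would use the one-sided form of the total variation distance in order to bypass any integrability issue. Taking $\nu_{\phi_c}^{L-1}$ in the role of the first measure and recalling $R_L\ge0$,
\[
\sup_{A\in\mathcal F_{L-1}}\big|\mu^{N,L}\circ T^{-1}[A]-\nu_{\phi_c}^{L-1}[A]\big|\ \le\ \EE_{\nu_{\phi_c}^{L-1}}\big[\mathbf{1}_{\Omega^*}\,(1-R_L)^+\big]+\nu_{\phi_c}^{L-1}\big[\Omega^{*c}\big].
\]
The point of this arrangement is that $(1-R_L)^+\le1$ is bounded; the large values of $R_L$, which occur precisely off $\Omega^*$ (where $N-S_{L-1}$ is small), never enter. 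Thus the first term tends to $0$ by bounded convergence once I show that $R_L\to1$ in $\nu_{\phi_c}^{L-1}$-probability on $\Omega^*$.

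Third, to identify the limit of $R_L=L\,p(N-S_{L-1})/p^{*L}(N)$ I would invoke (\ref{LLT}) at the single point $N$, which in the supercritical regime $N-\rho_cL\sim(\rho-\rho_c)L$ yields the single-big-jump asymptotics $p^{*L}(N)\sim L\,p\big(N-\rho_c(L-1)\big)$. Since $S_{L-1}/(L-1)\to\rho_c$ in probability (the mean is finite because $b>2$), we have $|S_{L-1}-\rho_c(L-1)|=o(L)$ with high probability, so both $N-S_{L-1}$ and $N-\rho_c(L-1)$ are $\sim(\rho-\rho_c)L$ with ratio tending to $1$; the regular variation $p(k)\sim c\,k^{-b}$ then forces $R_L\to1$. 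For the second term I would show $\nu_{\phi_c}^{L-1}[\Omega^{*c}]\to0$: on the high-probability event $\{S_{L-1}\le(\rho_c+\eps)(L-1)\}$ the occurrence of $\Omega^{*c}$ forces $\max_i\eta_i\ge cL$ for some $c>0$, and the union bound $\nu_{\phi_c}^{L-1}[\max_i\eta_i\ge cL]\le(L-1)\,\nu_{\phi_c}[\eta\ge cL]=O(L^{2-b})$ vanishes precisely because $b>2$.

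The main obstacle is the control of $R_L$, that is, the passage through (\ref{LLT}): everything hinges on the asymptotics $p^{*L}(N)\sim L\,p(N-\rho_c(L-1))$, which is the genuinely analytic subexponential input and the reason the conclusion is special to heavy-tailed marginals. By contrast, the combinatorial identity for $\mu^{N,L}\circ T^{-1}$ needs care only at the tie boundary $\{\max_i a_i=N-s\}$, but this is harmless: it is absorbed into $\Omega^{*c}$ and controlled by the same vanishing estimate used for the maximum.
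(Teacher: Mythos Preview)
Your proof is correct and uses the same three inputs as the paper---the local limit theorem (\ref{LLT}), the smoothness/regular variation of $p(k)=W(k)$, and the law of large numbers for $S_{L-1}$---but organises them differently. The paper proceeds by introducing an explicit good set $B_L=\{\eta:\,|\eta_{x_L}-(N-\rho_cL)|<C_L,\ \max_{j<L}\eta_{x_j}\le N-\rho_cL-C_L\}$ with an intermediate scale $C_L$ satisfying $a_L\ll C_L\ll L$, proves $\mu^{N,L}\circ T^{-1}[A\cap B_L]=\nu_{\phi_c}^{L-1}[A](1+o(1))$ uniformly in $A$ by summing over the last coordinate and replacing $W(m)$ by $W(N-[\rho_cL])$ via (\ref{smoothness}), and finally bootstraps to get $\mu^{N,L}\circ T^{-1}[B_L^c]\to0$ by taking $A$ to be the whole space. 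Your route is more measure-theoretic: you identify the Radon--Nikodym density $R_L$ on $\Omega^*$ explicitly and exploit the one-sided identity $\|P-Q\|_{\mathrm{t.v.}}=\sum(Q-P)^+$ so that only $(1-R_L)^+\le1$ enters, turning the problem into the single statement $R_L\to1$ in $\nu_{\phi_c}^{L-1}$-probability. This avoids having to choose a scale $C_L$ and makes the role of the subexponential local limit theorem very transparent; the paper's version, on the other hand, is slightly more constructive and makes explicit which window of values of the maximum carries all the mass. One minor remark: your phrase ``bounded convergence'' is a shorthand here since the underlying measure $\nu_{\phi_c}^{L-1}$ itself varies with $L$, but the intended argument $\EE[(1-R_L)^+]\le\varepsilon+\nu_{\phi_c}^{L-1}[(1-R_L)^+>\varepsilon]$ is of course valid.
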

\noindent
This extends the result of Ferrari, Landim and Sisko \cite{FLS} to the case where the number of sites increases to infinity together with the number of particles, and that of Gro\ss kinsky, Sch\"{u}tz and Spohn \cite{GSS} in the sense that convergence to the grand canonical distribution is obtained
for the joint distribution under $\mu^{N,L}$ of {\em all} the components in the bulk.\\
\\
Given a measure $\mu$ defined on a $\sigma$--algebra ${\cal B}$, let $\|\cdot\|_{\text{t.v.}}$ stand for the total variation norm
\[
\|\mu\|_{\text{t.v.}}=\sup_{A \in {\cal B}}|\mu(A)|.
\]
It is not hard to see that Theorem \ref{equivalence} then implies that
\[
\Big\| \mu^{N,L}-\frac{1}{L}\sum_{x\in\Lambda} \nu^{N,L}\circ\sigma^{x,x_L}\Big\|_{\text{t.v.}}\to 0\,,
\]
where $\nu^{N,L}$ is a probability measure on $\XX_\Lambda$ with marginal on ${\cal F}_{L-1}$ given by
$\nu_{\phi_c}^{L-1}$, and such that the distribution of $\eta_{x_L}$ given ${\cal F}_{L-1}$ equals 
the Dirac measure at $N-\sum_{j=1}^{L-1}\eta_{x_j}$.\\
\\
Several interesting facts about the invariant measures of the zero range process at supercritical densities
are now simple consequences of Theorem \ref{equivalence}. In view of (\ref{weaklaw}) we would like to compute the fluctuations of $M_L(\eta)$ around $(\rho-\rho_c)L$. This question was raised already in \cite{JMP} and has been
numerically investigated by Godr\`{e}che and Luck (see appendix A.2.2 in \cite{GL}). The numerical experiments
suggest that for $b>3$ the fluctuations of $M_{L}$ are of order $\sqrt{L}$ and Gaussian, while for $2<b<3$ they are of order $L^{\frac{1}{b-1}}$. 
Theorem \ref{equivalence} and the obvious equality
\[
M_{L}(\eta)=N-\sum_{x=1}^{L-1}(T\eta)_{x},\qquad \mu^{N,L}-a.s,
\]
imply that the fluctuations of the maximum component reduce to the fluctuations of the sum of $L-1$ independent random variables with mean $\rho_{c}$ around $\rho_{c}(L-1)$, for which standard central limit theorems are available \cite{GK}. The precise result is the following:
\begin{corollary} Suppose $\rho>\rho_{c}$.\\
a) If $b>3$, that is if $\nu_{\phi_{c}}$ has finite variance $\sigma^{2}=\frac{(b-1)^{2}}{(b-2)^{2}(b-3)}$, then for all $x\in\RR$:
\[
\lim_{\simlim}\mu^{N,L}\left[\frac{M_{L}(\eta)-(N-\rho_{c}L)}{\sigma L^{1/2}}\le x\right]=\frac{1}{\sqrt{2\pi}}\int_{-\infty}^x e^{-u^{2}/2}du.
\]
b) If $b=3$, then for all $x\in\RR$:
\[
\lim_{\simlim}\mu^{N,L}\left[\frac{M_{L}(\eta)-(N-\rho_{c}L)}{2\sqrt{L\log L}}\le x\right]=\frac{1}{\sqrt{2\pi}}\int_{-\infty}^x e^{-u^{2}/2}du.
\]
c) If $2<b<3$, then for all $x\in\RR$:
\[
\lim_{\simlim}\mu^{N,L}\left[\frac{M_{L}(\eta)-(N-\rho_{c}L)}{\big(\Gamma(b)L\big)^{\frac{1}{b-1}}}\le x\right]=\int_{-\infty}^{x}{\cal L}_{b-1}(u)\,du.
\]
where ${\cal L}_{\alpha}$ is the density of the completely asymmetric stable law with index $\alpha$ and characteristic function $\psi(t)$ given by:
\[
\log\psi(t)=\int_{-\infty}^{0}\left(e^{itx}-1-itx\right)\frac{\alpha dx}{|x|^{\alpha+1}}=-C_{\alpha}|t|^{\alpha}\left(1+i\  \text{\em sgn}(t)\tan\frac{\pi\alpha}{2}\right)
\]
\label{scalinglaws}
\end{corollary}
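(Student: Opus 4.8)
The plan is to deduce the corollary directly from Theorem \ref{equivalence} together with the classical limit theorems for sums of independent identically distributed random variables in the domain of attraction of a stable law \cite{GK}. The starting point is the a.s.\ identity $M_L(\eta)=N-\sum_{x=1}^{L-1}(T\eta)_x$ recorded just before the statement, which lets me rewrite every event in the corollary as one measurable with respect to the bulk coordinates of $T\eta$. Writing $a_L$ for the relevant normalisation in each case and fixing $x\in\RR$,
\[
\Big\{\frac{M_L(\eta)-(N-\rho_cL)}{a_L}\le x\Big\}=\Big\{\sum_{x'=1}^{L-1}(T\eta)_{x'}\ge\rho_cL-x\,a_L\Big\}=\{T\eta\in A_{N,L}\},
\]
where $A_{N,L}=\{\zeta:\sum_{x'=1}^{L-1}\zeta_{x'}\ge\rho_cL-x\,a_L\}\in\F_{L-1}$. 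Hence the $\mu^{N,L}$--probability equals $(\mu^{N,L}\circ T^{-1})[A_{N,L}]$, and since $A_{N,L}\in\F_{L-1}$, Theorem \ref{equivalence} gives
\[
\big|(\mu^{N,L}\circ T^{-1})[A_{N,L}]-\nu_{\phi_c}^{L-1}[A_{N,L}]\big|\le\sup_{A\in\F_{L-1}}\big|\mu^{N,L}\circ T^{-1}[A]-\nu_{\phi_c}^{L-1}[A]\big|\longrightarrow 0.
\]

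I then reduce the problem to a genuine i.i.d.\ statement. Under $\nu_{\phi_c}^{L-1}$ the coordinates are independent copies $X_1,\dots,X_{L-1}$ of a variable with law $\nu_{\phi_c}$ and mean $\rho_c$, so that
\[
\nu_{\phi_c}^{L-1}[A_{N,L}]=\PP\Big[\frac{\sum_{j=1}^{L-1}X_j-\rho_c(L-1)}{a_L}\ge\frac{\rho_c}{a_L}-x\Big].
\]
Because $a_L\to\infty$ the shift $\rho_c/a_L$ is negligible (and replacing $L-1$ by $L$ in the norming is harmless by Slutsky), so it remains to apply the appropriate central or stable limit theorem to the centred sum and to read off the limit at the continuity point $-x$, using the symmetry of the Gaussian in cases (a),(b), and the reflection $W\mapsto-W$ that turns the right-skewed stable limit of the sum (its heavy tail being on the positive side, since $X_j\ge0$) into the left-skewed law $\L_{b-1}$ appearing in (c).

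The only model-specific input is the tail of $\nu_{\phi_c}$. From $g(k)!\sim k^{b}/\Gamma(b+1)$ one gets $\nu_{\phi_c}[\eta_x=k]\sim(\Gamma(b+1)/Z(\phi_c))\,k^{-b}$, and a short computation with Beta integrals gives the explicit constants $Z(\phi_c)=b/(b-1)$ and $\rho_c=1/(b-2)$, whence $\nu_{\phi_c}[\eta_x>t]\sim\Gamma(b)\,t^{-(b-1)}$. Thus the summands have a tail of index $b-1$: for $b>3$ the variance is finite and equals $\sigma^2=(b-1)^2/[(b-2)^2(b-3)]$ (the same Beta-integral computation confirms this value from the statement), giving the ordinary CLT (a); for $b=3$ the truncated second moment is slowly varying, placing the law in the domain of attraction of the normal with a logarithmic norming (b); and for $2<b<3$ the index $\alpha=b-1\in(1,2)$ puts the law in the domain of attraction of a completely asymmetric $\alpha$-stable law, with norming constant $(\Gamma(b)L)^{1/(b-1)}$ dictated precisely by the tail constant $\Gamma(b)$ (c). These are exactly the Gnedenko--Kolmogorov limit theorems of \cite{GK}.

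The conceptual point worth stressing is where the strength of Theorem \ref{equivalence} is used. The functional $M_L$, equivalently $\sum_{x=1}^{L-1}(T\eta)_x$, depends on \emph{all} of the bulk coordinates, and the sets $A_{N,L}$ move with $N$ and $L$; convergence of finite-dimensional marginals --- all that \cite{GSS} provides --- is therefore of no use here. What makes the argument go through is precisely the uniformity over all $A\in\F_{L-1}$ in Theorem \ref{equivalence}, that is total-variation convergence of the full joint law of the bulk. Once this replacement is made the remaining work is classical, the only genuine computations being the tail asymptotics and the matching of the norming constants above. I expect the $b=3$ case to be the most delicate point, since there the slowly varying truncated variance must be computed explicitly and inserted into the Gnedenko--Kolmogorov norming relation to recover the stated $\sqrt{L\log L}$ scale.
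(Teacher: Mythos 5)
Your proposal is correct and is essentially the paper's own argument: the paper derives this corollary exactly as you do, by combining the $\mu^{N,L}$-a.s.\ identity $M_{L}(\eta)=N-\sum_{x=1}^{L-1}(T\eta)_{x}$ with the uniformity over $A\in{\cal F}_{L-1}$ in Theorem \ref{equivalence} (which, as you rightly stress, is what licenses the $(N,L)$-dependent events and is where the finite-dimensional result of \cite{GSS} would not suffice), and then invoking the classical Gnedenko--Kolmogorov limit theorems with the tail asymptotics (\ref{asymptotics}) and, for $b=3$, the truncated second moment computation $\EE^{\nu_{\phi_c}}\big[\eta_x^2\,1_{\{\eta_x\le L\}}\big]\sim 4\log L$. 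The one step you defer --- inserting the slowly varying truncated variance into the norming relation in case (b) --- is precisely the computation the paper supplies, so your reduction, including the reflection argument yielding the spectrally negative stable law in case (c), follows the published route.
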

\noindent
Note that for $b=3$ we still have Gaussian fluctuations after proper scaling. \\
\\
Clearly, one can go on and obtain limit theorems for the statistics of any order under $\mu^{N,L}$ from the corresponding result for product measures. For instance, the second largest component is given by 
\[
M_{L}^{(2)}(\eta)=\max_{1\le x\le L-1}(T\eta)_{x}
\]
and the following limit theorem is a direct consequence of Theorem \ref{equivalence} and the estimate (\ref{asymptotics}) for the tail probabilities under $\nu_{\phi_c}$.
\begin{corollary}
Suppose $b>2$ and let $\rho>\rho_{c}$. Then, for any $x>0$
\[
\lim_{\simlim}\mu^{N,L}\left[M_{L}^{(2)}(\eta)\le x{\Big(\Gamma(b)L\Big)^{\frac{1}{b-1}}}\right]\ =\ e^{-x^{1-b}}.
\]
\label{secondlargest}
\end{corollary}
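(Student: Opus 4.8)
The plan is to reduce the statement to a classical Fr\'echet extreme-value computation for the product measure $\nu_{\phi_c}^{L-1}$ and then transfer it to $\mu^{N,L}$ by means of the total variation bound of Theorem \ref{equivalence}. Write $t_L=x\big(\Gamma(b)L\big)^{1/(b-1)}$. The first observation is that the event $\{M_L^{(2)}(\eta)\le t_L\}$ is the $T$-preimage of a set in ${\cal F}_{L-1}$. Indeed, setting
\[
A=\{\zeta\in\XX_\Lambda:\ \zeta_{x_i}\le t_L\ \text{ for all }\ 1\le i\le L-1\}\in{\cal F}_{L-1},
\]
the very definitions of $M_L^{(2)}$ and of $T$ give $\{M_L^{(2)}\le t_L\}=T^{-1}A$, and therefore $\mu^{N,L}[M_L^{(2)}\le t_L]=\mu^{N,L}\circ T^{-1}[A]$.

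Since $A\in{\cal F}_{L-1}$, Theorem \ref{equivalence} yields $\big|\mu^{N,L}\circ T^{-1}[A]-\nu_{\phi_c}^{L-1}[A]\big|\to 0$, and because the bound there is uniform over all of ${\cal F}_{L-1}$ it applies to the $L$-dependent event $A$ as well. Thus it suffices to compute $\lim\nu_{\phi_c}^{L-1}[A]$. Under the product measure the $L-1$ bulk components are i.i.d.\ with law $\nu_{\phi_c}$, so
\[
\nu_{\phi_c}^{L-1}[A]=\Big(1-\nu_{\phi_c}[\eta_x>t_L]\Big)^{L-1}.
\]
Now invoke the tail estimate (\ref{asymptotics}), which gives $\nu_{\phi_c}[\eta_x>k]\sim\Gamma(b)\,k^{1-b}$ as $k\to\infty$ (the replacement of $t_L$ by its integer part being harmless since $t_L\to\infty$ and the tail is regularly varying). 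With the chosen normalisation,
\[
(L-1)\,\nu_{\phi_c}[\eta_x>t_L]\ \sim\ L\,\Gamma(b)\,t_L^{\,1-b}\ =\ \Gamma(b)\,x^{1-b}\,L\,(\Gamma(b)L)^{-1}\ =\ x^{1-b}.
\]
Taking logarithms in the displayed product and using $\log(1-u)=-u\,(1+o(1))$ as $u\to0$, we obtain $\nu_{\phi_c}^{L-1}[A]\to e^{-x^{1-b}}$, which combined with the previous paragraph proves the corollary.

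The only point requiring care --- and the place where the full strength of Theorem \ref{equivalence} is indispensable --- is that the threshold $t_L$ grows with $L$, so the target event $A$ is not fixed. A merely finite-dimensional or marginal convergence statement would be useless here; it is precisely the uniformity of the total variation estimate over the whole of ${\cal F}_{L-1}$ that legitimises substituting $\nu_{\phi_c}^{L-1}$ for $\mu^{N,L}\circ T^{-1}$ on this moving event. Once this substitution is justified, the remainder is the textbook derivation of a Fr\'echet limit law from a regularly varying tail, and I anticipate no further obstacle.
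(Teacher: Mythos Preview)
Your proposal is correct and is exactly the argument the paper has in mind: the text states that the corollary ``is a direct consequence of Theorem \ref{equivalence} and the estimate (\ref{asymptotics}) for the tail probabilities under $\nu_{\phi_c}$,'' and what you have written is a careful unpacking of precisely that sentence. Your emphasis on why the uniformity in Theorem \ref{equivalence} is essential (the event $A$ depends on $L$) is well placed and makes explicit the reason this corollary genuinely needs the main theorem rather than mere finite-dimensional convergence.
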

\noindent
The fluctuations of the bulk are closely related to the fluctuations of the maximum component.
It follows from Corollary \ref{secondlargest} that in the limit, $m_L$ is the only site where the number of particles is of order $L$. Given $\zeta\in(0,\rho-\rho_c)$ we define the bulk configuration as $\eta_x^{*}=\eta_x 1_{\{\eta_x<\zeta L\}}$, and the rescaled bulk fluctuation process $Y_L(\cdot)\in D[0,1]$ as
\[
Y_L(t)=\frac{1}{a_L}\sum_{j=1}^{[Lt]} (\eta_{x_j}^{*}-\rho_c),
\]
where
\begin{equation}
a_L=\begin{cases} \sigma\sqrt{L} &\text{ if } b>3 \\
			      2\sqrt{L\log L} &\text{ if } b=3 \\
			      \big(\Gamma(b)L\big)^{\frac{1}{b-1}} &\text{ if } 2<b<3.
	\end{cases}
\label{aL}
\end{equation}
\\
\noindent
The following corollary follows easily from Theorem \ref{equivalence} and Donsker's invariance principle or
its extension by Skorokhod (Theorem 2.7 in \cite{S}) to i.i.d. random variables in the domain of attraction of a stable law.
\begin{corollary}
Suppose $\rho>\rho_c$ and let $b>2$. Then under $\mu^{N,L}$
\[
Y_{L}(\cdot)\stackrel{d}{\longrightarrow} \xi_b(\cdot),\qquad \text{as } N\to\infty,\ L\to\infty,\ N/L\to\rho , 
\]
where $\xi_b$ is a standard Wiener process if $b\ge 3$, or a completely asymmetric stable process 
with index $\alpha=b-1$ and characteristic exponent
\[
\log\psi(-t)=\int_{0}^{\infty}\left(e^{itx}-1-itx\right)\frac{\alpha dx}{|x|^{\alpha+1}}=-C_{\alpha}|t|^{\alpha}(1-i\  \text{\em sgn}(t)\tan\frac{\pi\alpha}{2}),
\]
if $2<b<3$.
\label{bulk}
\end{corollary}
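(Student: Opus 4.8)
The plan is to reduce the statement to a functional limit theorem for partial sums of i.i.d.\ $\nu_{\phi_c}$-variables, and then transfer it from $\mu^{N,L}$ to the product measure through Theorem~\ref{equivalence} and the exchange operator $T$. First I would isolate the condensate. By the weak law (\ref{weaklaw}) the maximum satisfies $M_L/L\to\rho-\rho_c>\zeta$, while Corollary~\ref{secondlargest} shows the second largest component is of order $L^{1/(b-1)}=o(L)$. Hence, with probability tending to $1$, exactly one component exceeds $\zeta L$, namely the one at $m_L$, so on this event the cutoff defining $\eta^{*}$ annihilates only the maximal component and leaves every bulk value untouched; since the complementary event has vanishing probability, it is irrelevant for convergence in distribution.

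Next I would introduce the capped partial-sum functional $W_L(\zeta)(t)=\frac{1}{a_L}\sum_{j=1}^{[Lt]\wedge(L-1)}(\zeta_{x_j}-\rho_c)$, which depends only on the first $L-1$ coordinates and is therefore $\F_{L-1}$-measurable. Under $\nu_{\phi_c}^{L-1}$ the summands are i.i.d.\ with mean $\rho_c$ and tail $\nu_{\phi_c}[\eta_x\ge k]\sim c\,k^{1-b}$ by (\ref{asymptotics}); thus $W_L$ converges to $\xi_b$ by Donsker's theorem when $b>3$, by its boundary-case refinement when $b=3$ (where the truncated variance is slowly varying, which produces the $\sqrt{\log L}$ factor), and by Skorokhod's functional stable limit theorem (Theorem~2.7 in \cite{S}) when $2<b<3$, the complete asymmetry of the limit reflecting that the large values of $\eta_x$ are positive. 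Because $W_L$ is $\F_{L-1}$-measurable, Theorem~\ref{equivalence} guarantees that the law of $W_L\circ T$ under $\mu^{N,L}$ is within total variation $o(1)$ of its law under $\nu_{\phi_c}^{L-1}$, so $W_L(T\eta)$ also converges to $\xi_b$ under $\mu^{N,L}$.

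Finally I would compare $Y_L$ with $W_L\circ T$. Since $T$ merely transposes the coordinates $m_L$ and $x_L$, on the event above one computes $Y_L(\eta)(t)=W_L(T\eta)(t)-\frac{\eta_{x_L}}{a_L}\mathbf 1_{\{m_L\le[Lt]\}}$ for $t<1$, together with an $O(\rho_c/a_L)$ discrepancy at $t=1$ arising from centering $L$ rather than $L-1$ terms, so the supremum of the remainder is at most $(\eta_{x_L}+\rho_c)/a_L$. By the converging-together lemma (the Skorokhod distance being dominated by the uniform distance) it then suffices to prove $\eta_{x_L}/a_L\to0$ in probability, after which $Y_L$ and $W_L\circ T$ share the limit $\xi_b$. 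I expect this last point to be the main obstacle: although $\eta_{x_L}$ is a bulk value with high probability, it may coincide with the second-largest component, which by Corollary~\ref{secondlargest} is itself of order $a_L$ when $2<b<3$. The resolution is a first-moment estimate: by exchangeability $\PP^{\mu^{N,L}}[\eta_{x_L}>\epsilon a_L]=\frac1L\EE^{\mu^{N,L}}[\#\{x:\eta_x>\epsilon a_L\}]$, and the subexponential tail (\ref{asymptotics}) bounds the expected number of large components by a constant, whence the probability is $O(1/L)\to0$. This is exactly the place where subexponentiality, rather than the mere finite-dimensional marginal convergence, is indispensable.
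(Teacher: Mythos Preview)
Your argument is correct and follows precisely the approach the paper sketches: transfer the functional limit theorem for i.i.d.\ partial sums (Donsker for $b\ge 3$, Skorokhod for $2<b<3$) from $\nu_{\phi_c}^{L-1}$ to $\mu^{N,L}$ via Theorem~\ref{equivalence}, after isolating the condensate and comparing $Y_L$ with $W_L\circ T$. The final step is in fact easier than you fear---tightness of the one-dimensional marginal of $\mu^{N,L}$ (already a consequence of the finite-dimensional equivalence of ensembles) immediately gives $\eta_{x_L}/a_L\to 0$ in probability since $a_L\to\infty$, so no special subexponential estimate is needed at this point.
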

\noindent
It is worth comparing Corollary \ref{bulk} with the bulk fluctuations at criticality. If  $N=[\rho_c L]$  then, according to a result in Thomas Liggett's dissertation (cf. Theorem 4 in \cite{L}),  $Y_L{(\cdot)}$ converges in distribution to the bridge of $\xi_b$ conditioned to return to the origin at time 1. \\
\\
\noindent
Theorem \ref{equivalence} can be also applied to the numerical simulation of the invariant states $\mu^{N,L}$, when $N/L\to\rho>\rho_{c}.$
For large $L$, instead of drawing a sample from a distribution $\mu^{N,L}$, it is computationally more efficient to draw $L-1$ independent samples from 
a distribution $\nu_{\phi_{c}}$, and assign the rest of the mass to a site uniformly distributed in $\{1,2,\ldots,L\}$. \\
\\
We present the proof to the main result in the following section. We conclude (section 4) by discussing two questions that arise naturally from Theorem \ref{equivalence}. In the first one, we study a model such that the associated invariant measure  $\nu_{\phi_c}$ has a stretched exponential tail, and prove that Theorem 
\ref{equivalence} still holds.  In the second one, we consider a family of  systems with particle numbers $N$ deviating moderately from the typical value $\rho_c L$,  and refine our estimate of the threshold of values for $N$ where a phase transition occurs.

\section{Proof of Theorem \ref{equivalence}}
We begin this section with a few observations on the model. Recall from the previous section that the jump rates are given by $g(k)=1+\frac{b}{k}$ for $k>0$, and the critical fugacity $\phi_{c}$ is equal to 1. Recall also that since we assume $b>2$ both $Z(\phi_c)$ and $\rho_{c}$ are finite. Although the precise value of the partition function, the critical density, or other statistics of $\nu_{\phi_{c}}$ are not important, it was pointed out in \cite{GSS} that they can be explicitly computed using the hypergeometric identity \cite{A}
\begin{equation}
\sum_{k=0}^{\infty} \frac{\Gamma(u+k)\Gamma(v+k)}{\Gamma(w+k)\ k!}=\frac{\Gamma(u)\Gamma(v)\Gamma(w-u-v)}{\Gamma(w-u)\Gamma(w-v)},
\label{hg}
\end{equation}
valid for any $u,v,w>0$ with $w>u+v$. For instance,
\[
Z(\phi_{c})=\frac{b}{b-1},\  \rho_{c}=\frac{1}{b-2}, \text{  and if } b>3 \text{ then } \sigma^2=\frac{(b-1)^2}{(b-2)^2(b-3)}.
\]
We will next derive a smoothness estimate for the function
\[
W(k)=\nu_{\phi_c}\big[\eta_x=k\big]=\frac{1}{Z(\phi_c)g(k)!}=\frac{(b-1)\Gamma(b)k!}{\Gamma(k+b+1)}.
\]
It is clear that $W$ is decreasing, while from the elementary inequality
\begin{equation}
1+x\ge e^{\frac{x}{1+x}}\ \ \  x>-1\,,
\label{elementary}
\end{equation}
one can easily deduce that $W(k)k^b$ is increasing. Thus, for $k_1\le k_2$ we get
\begin{equation}
W(k_1)\ge W(k_2)\ge W(k_1)\left(\frac{k_1}{k_2}\right)^b.
\label{smoothness}
\end{equation}
We can also apply (\ref{hg}) to compute the tail probabilities of $\nu_{\phi_c}$ as follows
\[
\sum_{k=m}^\infty W(k) 
=(b-1)\Gamma(b)\sum_{k=0}^{\infty}\frac{\Gamma(m+1+k)}{\Gamma(m+b+1+k)}
=\frac{\Gamma(b)\ m!}{\Gamma(m+b)}.
\]
Hence, if we denote by $F$ the distribution function of $\nu_{\phi_{c}}$
and by $\bar{F}=1-F$ its tail, we get the following asymptotic behavior at infinity
\begin{equation}
W(k)\sim (b-1)\Gamma(b)k^{-b},\ \text{ and }\ \bar{F}(x)\sim\Gamma(b)x^{1-b}.
\label{asymptotics}
\end{equation}
\noindent
This observation explains the normalizing constants in the statements of  Corollary \ref{secondlargest} and Corollaries \ref{scalinglaws} and \ref{bulk} for $2<b<3$. The logarithmic correction when $b=3$ comes from the direct computation
\[
\EE^{\nu_{\phi_c}}\big[\eta_x^2\ 1_{\{\eta_x\le L\}}\big]\sim 4\sum_{k=1}^{L}\frac{1}{k}\sim 4\log L.
\]
\noindent
The proof of Theorem \ref{equivalence} relies on a local limit theorem for the (unconditioned) measure at criticality. It estimates the probability of the event we are conditioning upon in the definition of 
$\mu^{N,L}$. Such a result first appeared in Nagaev \cite{N1} for $b>3$ and Tka\v{c}uk \cite{T} for 
$b<3$. Baltrunas \cite{B} gives an accessible proof that encompasses all values of  $b>2$.
\begin{prop} If $\rho>\rho_{c}$ then
\begin{equation}
\lim_{\simlim}\frac{\nu_{\phi_{c}}^L\big[S_{L}(\eta)=N\big]}{L\nu_{\phi_{c}}\big[\eta_{x}=N-[\rho_{c}L]\big]}=1.
\label{LLT}
\end{equation}
\label{locallimit}
\end{prop}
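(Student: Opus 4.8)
The plan is to establish the \emph{single big jump} principle: the conditioning event $\{S_L(\eta)=N\}$ forces an excess of order $(\rho-\rho_c)L$ above the typical total mass, and with overwhelming probability this excess is carried by a single component while the remaining $L-1$ components behave typically; the prefactor $L$ counts the choices for the exceptional site. Writing $A_L=N-[\rho_c L]\sim(\rho-\rho_c)L$, I would prove matching bounds for the ratio $R_L=\nu_{\phi_c}^L[S_L=N]/(L\,W(A_L))$ and conclude $R_L\to1$. The three analytic inputs are: the regular variation $W(k)\sim(b-1)\Gamma(b)k^{-b}$ of (\ref{asymptotics}) together with the smoothness estimate (\ref{smoothness}), which let me replace $W(N-s)$ by $W(A_L)$ uniformly whenever $s=\rho_c(L-1)+o(L)$; the bulk limit theorem for $S_{L-1}$ (a central limit theorem for $b\ge3$, convergence to a one-sided stable law of index $b-1$ for $2<b<3$, cf. \cite{GK}), used both in integral form and in its local form $\sup_m\nu_{\phi_c}^{\,n}[S_n=m]=O(a_n^{-1})$ (the marginal is aperiodic on $\ZZ$, so the local limit theorem applies with span one); and a Fuk--Nagaev (Bennett-type) exponential inequality for sums truncated at level $\eps L$.

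For the lower bound I would fix $\eps>0$, restrict to configurations in which exactly one component exceeds $\eps L$ and the $L-1$ others lie in the window $\rho_c(L-1)+[-Ra_{L-1},Ra_{L-1}]$, and use disjointness over the choice of large site to gain the factor $L$. On this event $N-s=A_L(1+o(1))$, so (\ref{smoothness}) gives $W(N-s)\ge W(A_L)(1-o(1))$; since the restricted masses sum, as $L\to\infty$, to the mass the (normal or stable) limit law places on $[-R,R]$, and the probability of any component exceeding $\eps L$ is $O(L^{2-b})\to0$, letting $L\to\infty$ and then $R\to\infty$ yields $\liminf R_L\ge1$.

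For the upper bound I would split $\{S_L=N\}$ according to the number of components larger than $\eps L$. With no large component, $\{S_L=N,\ M_L\le\eps L\}$ is a large deviation of a sum truncated at $\eps L$; the Fuk--Nagaev inequality bounds it by $L^{-\kappa}$ with $\kappa=\kappa(\eps)\to\infty$ as $\eps\to0$ (the truncated second moment being $O(1)$, $O(\log L)$ or $O((\eps L)^{3-b})$ according as $b>3$, $b=3$ or $2<b<3$), so this term is $o(L^{1-b})$ once $\eps$ is small. With at least two large components, a union bound over the pair together with the local bound $\sup_m\nu_{\phi_c}^{L-2}[S_{L-2}=m]\le C/a_{L-2}$ gives a contribution of order $L^2a_L^{-1}\bar F(\eps L)^2$, which a short computation shows is $o(L^{1-b})$ for every $b>2$ (here one uses $a_L\sim L^{1/(b-1)}$ in the stable regime). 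The exactly-one-large-component term equals $L\sum_{k>\eps L}W(k)\,\nu_{\phi_c}^{L-1}[S_{L-1}=N-k,\ \max\le\eps L]$; splitting the inner sum into the window $|N-k-\rho_c(L-1)|\le Ra_{L-1}$ (where (\ref{smoothness}) gives $W(k)\le W(A_L)(1+o(1))$ and the masses sum to at most one) and its complement (where the truncated remainder itself undergoes a large deviation, again controlled by Fuk--Nagaev) produces $\le(1+o(1))L\,W(A_L)$ plus a term that vanishes relative to $L^{1-b}$. Fixing $\eps$ small enough for the no-big-jump bound and letting $L\to\infty$ and then $R\to\infty$ gives $\limsup R_L\le1$, completing the proof.

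The main obstacle is the upper bound in the infinite-variance range $2<b<3$: there the truncated second moment grows like $(\eps L)^{3-b}$, so the no-big-jump and, especially, the two-big-jump estimates must be tuned carefully, and it is precisely here that the local bound $\sup_m\nu_{\phi_c}^{\,n}[S_n=m]=O(a_n^{-1})$ for the bulk is needed to beat the naive $\binom{L}{2}\bar F(\eps L)^2$ estimate. Verifying that the resulting exponents stay strictly below that of the main term $L^{1-b}$ for all $b>2$, and organising the iterated limits $L\to\infty$, $R\to\infty$ so that the $(1+o(1))$ factors collapse to $1$, is the delicate part of the argument.
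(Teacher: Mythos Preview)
The paper does not prove Proposition~\ref{locallimit} at all: it is quoted as a known result, with references to Nagaev~\cite{N1} for $b>3$, Tka\v{c}uk~\cite{T} for $2<b<3$, and Baltrunas~\cite{B} for a unified treatment. So there is no ``paper's own proof'' to compare against; your proposal is an attempt to supply what the paper takes from the literature.

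Your sketch follows the standard \emph{single big jump} strategy used in those references, and the architecture (lower bound by restriction to one dominant site; upper bound by the trichotomy zero/one/at least two components above $\eps L$; Fuk--Nagaev for the no-big-jump term; the uniform local bound $\sup_m\nu_{\phi_c}^{\,n}[S_n=m]=O(a_n^{-1})$ to handle the two-big-jump term) is correct. The exponent checks you outline for the two-big-jump contribution are right, including the borderline computation $(3-b)(b-1)<1$ for $2<b<3$.

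One spot is understated. In the exactly-one-big-jump term, the ``outside the window'' range $\eps L<k<A_L-Ra_{L-1}$ is not uniformly a large deviation for the truncated remainder: when $k$ is close to $A_L-Ra_{L-1}$ the centered remainder is only of order $Ra_{L-1}$, so Fuk--Nagaev gives nothing useful there. You need a further split, e.g.\ $k\in(\eps L,\tfrac12 A_L)$, where the remainder deviates by order $L$ and Fuk--Nagaev applies, versus $k\in(\tfrac12 A_L,\,A_L-Ra_{L-1})$, where the smoothness estimate~(\ref{smoothness}) gives $W(k)\le 2^b W(A_L)$ and the total mass of the remainder outside the window is bounded by $\nu_{\phi_c}^{L-1}\big[S_{L-1}-\rho_c(L-1)>Ra_{L-1}\big]$, which tends to $0$ as $R\to\infty$ by the limit theorem for $S_{L-1}$. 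With that refinement the argument goes through; this is exactly the kind of splitting Baltrunas~\cite{B} performs.
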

\noindent Equation (\ref{LLT}) says that the most probable way that the rare event $\{S_{L}(\eta)=N\}$ occurs is when one variable takes up all the "excess mass", while the remaining $L-1$ ones assume typical values. This behaviour is to be contrasted with the large deviations behavior for random variables with finite exponential moments, where the rare event is realised by all variables taking values close to the atypical $\rho$. \\
\noindent
We proceed now with the proof of Theorem \ref{equivalence}.
\begin{proof}[Proof of Theorem \ref{equivalence}] Recall from Section 2 that $\sigma^{x_i,x_j}$ stands for the mapping that exchanges the $i$--th and the $j$--th  components of $\eta$, and that $T$ denotes the transformation that exchanges the last and the maximum components of $\eta$.\\

\noindent Let $A\subseteq \{\eta:\ \eta_{x_L}>\eta_{x_j},\ j=1,2,\ldots,L-1\}$. Due to the invariance of $\mu^{N,L}$ under $\sigma^{x_L,x_{\ell}},\,\ell=1,\dots,L$, we get
\begin{align}
\mu^{N,L}\big[T^{-1}A\big] &= \sum_{\ell=1}^{L}\mu^{N,L}\big[T^{-1}A\cap \{m_{L}=\ell\}\big]\nonumber\\
&=  \sum_{\ell=1}^{L}\mu^{N,L}\circ\sigma^{x_{L},x_{\ell}}\big[A]\,=\,L\mu^{N,L}\big[A]\nonumber\\
&= L\,\frac{\nu_{\phi_c}^L\big[A\cap\{S_L(\eta)=N\}\big]}{\nu_{\phi_c}^L\big[S_L(\eta)=N\big]}. 
\label{exchange}
\end{align}

\noindent
Consider a sequence $C_L$ such that $C_L/L\to 0$ and $C_L/a_L\to\infty$, where $a_L$ is defined in (\ref{aL}). Let  $D_L=\{m:\ |N-\rho_c L -m|<C_L\}$,  $t_L=N-\rho_c L -C_L$ and
$
B_L=\{\eta: \eta_{x_L}\in D_L;\ \max_{1\le j\le L-1} \eta_{x_j}\le t_L\}$.\\

\noindent Suppose now that $A\in{\cal F}_{L-1}=\sigma\{\eta_{x_1},\ldots,\eta_{x_{L-1}}\}$
We will apply equation (\ref{exchange}) to  the set $A\cap B_L$. Note that
\[
\nu_{\phi_c}^{L}\big[A\cap B_L\cap\{S_L(\eta)=N\}\big]=\sum_{m\in D_L} W(m)\nu_{\phi_c}^{L-1}\Big[ A\cap\big\{\sum_x \eta_x=N-m;\ \max_x \eta_x\le t_L\big\}\Big].
\]
\noindent
In view of (\ref{smoothness}) we can replace each value $W(m)$ in the range of summation by $W\big(N-[\rho_c L]\big)$, creating an error that is negligible as $L\to\infty$ uniformly in $A$. That is,
\begin{align*}
&\nu_{\phi_c}^L\big[A\cap B_L\cap\{S_L(\eta)=N\}\big]\\
&\hspace{2cm}= W\big(N-[\rho_c L]\big)\ \nu_{\phi_c}^{L-1}\bigg[ A\cap\Big\{\big|\sum_x \eta_x-\rho_c L \big| <C_L;\ \max_x \eta_x\le t_L\Big\}\bigg] \big(1+o(1)\big).
\end{align*}
\noindent
Since $C_L/a_L\to\infty$, the central limit theorem implies that 
\begin{equation*}
\nu_{\phi_c}^{L-1}\Big[\  \big|\sum_x \eta_x-\rho_c L \big| <C_L\Big]\longrightarrow 1\ \text{ as } L\to\infty,
\end{equation*}
\noindent
and there is also the elementary estimate
\begin{equation*}
\nu_{\phi_c}^{L-1}\big[ \max_x \eta_x\le t_L\big] = \big(1-\bar{F}(t_L)\big)^{L-1}\longrightarrow 1\ \text{ as } L\to\infty.
\end{equation*}
\noindent
Combining these two observations, we get
\[
\nu_{\phi_c}^L\big[A\cap B_L\cap\{S_L(\eta)=N\}\big]= W\big(N-[\rho_c L]\big)\ \nu_{\phi_c}^{L-1}\big[A\big]\  \Big(1+o(1)\Big).
\]
\noindent
Together with equation  (\ref{exchange}) and Proposition \ref{locallimit} this establishes that 
\[
\lim_{\simlim}\sup_{A\in{\cal F}_{L-1}}\Big|\mu^{N,L}\circ T^{-1}\big[A\cap B_L\big]-\nu_{\phi_c}^{L-1}\big[A\big] \Big|=0.
\] 
\noindent
In particular, if $A=\XX_{\Lambda}$, we get that 
\[
\lim_{\simlim}\mu^{N,L}\circ T^{-1}\big[B_L^{c}\big]=0,
\]
\noindent
$B_L^c=\XX_{\Lambda}\setminus B_L$. The assertion of the Theorem now follows by combining the last two equations.
\end{proof}

\section{Remarks}
We identified the condensation phenomenon present in supercritical zero range processes by proving the equivalence of ensembles in the standard Evans' model. It should be clear however that the essential ingredient for the proof is a Local Limit Theorem in the form of (\ref{LLT}). There are thus two possible directions to generalise Theorem \ref{equivalence}.
Its validity should be established for a greater variety of models, and the point where the phase transition with the emergence of a large cluster occurs should be determined with greater accuracy.\\
\\
We describe next how the proof can be adapted to a model for condensation with stretched exponential tails, also proposed by Evans.\\
\\
Suppose the jump rates are given by the function $g$ with
\begin{equation}
g(k)=\begin{cases} 1+\frac{\beta}{k^{\lambda}} & \text{ if } k>0\\     
                                   0 & \text{ if } k=0,
                                   \end{cases}
\label{strexp}
\end{equation}
where $\lambda\in(\frac{1}{2},1).$
The critical fugacity is still 1, although it is not possible to explicitly compute the distribution function and the critical density in this case. Nevertheless, it is elementary to see that $W(k)=\nu_{\phi_c}\big[\eta_x=k\big]$ is decreasing while $W(k)\exp\big(\frac{\beta k^{1-\lambda}}{1-\lambda}\big)$ is increasing so that we have
\begin{equation}
W(k_1)\ge W(k_2)\ge W(k_1)\exp\Big(-\beta\,\frac{k_2^{1-\lambda}-k_1^{1-\lambda}}{1-\lambda}\Big),\hspace{1cm} k_1\le k_2.
\label{strsmoothness}
\end{equation}
In fact, using (\ref{elementary}) one can check that
\[
W(k)\le\exp\Bigg(-\sum_{m=1}^k \frac{\beta}{\beta+m^{\lambda}}\Bigg)\le C\exp\Big(-\frac{\beta \,k^{1-\lambda}}{1-\lambda}\Big),
\]
and  the following asymptotic behavior for $W$ holds
\[
W(k)\sim A\, \exp\Big(-\frac{\beta\, k^{1-\lambda}}{1-\lambda}\Big) \hspace{.5cm}\text{ as } k\to\infty.
\]
This yields the asymptotic behavior of $\bar{F}(x)$
\begin{equation}
\bar{F}(x)\sim \frac{Ax^{\lambda}}{\beta}\exp\Big(-\frac{\beta\, x^{1-\lambda}}{1-\lambda}\Big)\hspace{.5cm} \text{ as } x\to\infty.
\label{strtail}
\end{equation}
In this context, Nagaev \cite{N2} has proved that (\ref{LLT}) is satisfied as long as $N=\rho_c L+\gamma(L)L^{\frac{1}{2\lambda}}$ with $\gamma(L)\to\infty$ as $L\to\infty$. In view of equations (\ref{strsmoothness}) and (\ref{strtail}), we may choose the sequence  $C_L=\sqrt{L\log L}$ in the line following the expression (\ref{exchange}),  and adapt  the arguments presented in the previous section to prove the following theorem.\\
\\
{\bf Theorem 1a} {\em If} $g(\cdot)$ {\em is given by (\ref{strexp}) and} $N=\rho_c L+\gamma(L)L^{\frac{1}{2\lambda}}$ {\em where} $\lim\gamma(L)=\infty$, {\em then}
\[
\lim_{L\to\infty} \sup_{A\in{\cal F}_{L-1}}\big|\mu^{N,L}\circ T^{-1}\big[A\big]-\nu_{\phi_{c}}^{L-1}\big[A\big]\big|= 0.
\]
In a similar fashion we can relax the conditions on $N$ in Theorem \ref{equivalence} provided we prove the validity of (\ref{LLT}) for values of $N$ deviating only moderately from its typical value. For instance, when $b>3$  Theorem 2 in \cite{D} implies that if $(N-\rho_c L)/\sqrt{L}\to\infty$ then
\[
\nu_{\phi_c}^L\big[S_L(\eta)=N\big]=\frac{1}{\sigma\sqrt{L}}\,\varphi\left(\frac{N-\rho_c L}{\sigma\sqrt{L}}\right) \big(1+o(1)\big)\,+\,LW\big(N-[\rho_c L]\big)\big(1+o(1)\big),
\]
where $\varphi(\cdot)$ is the density of the standard normal distribution. It is not hard to see that in this case 
(\ref{LLT}) holds as long as
\begin{equation}
N=\rho_c L+\frac{b-1}{b-2}\sqrt{L\log L}\left(1+\frac{b}{2(b-3)}\frac{\log\log L}{\log L}+\frac{\gamma(L)}{\log L}\right), \text{ with } \lim_{L\to\infty}\gamma(L)=\infty.
\label{refin_range}
\end{equation}
Once again, choosing $C_L=\sqrt{L\log L}$ we can prove  the following refinement of Theorem \ref{equivalence}.\\
\\
{\bf Theorem 1b} {\em If} $g(\cdot)$ {\em is given by (\ref{rates}) with } $b>3$ {\em and} $N$ {\em is as in 
(\ref{refin_range}), then}
\[
\lim_{L\to\infty} \sup_{A\in{\cal F}_{L-1}}\big|\mu^{N,L}\circ T^{-1}\big[A\big]-\nu_{\phi_{c}}^{L-1}\big[A\big]\big|= 0.
\]
Similar refinements of Theorem \ref{equivalence} can be obtained for the case when $b\le 3$.\\
\\
\noindent
{\bf Acknowledgments:} We would like to thank Claudio Landim for suggesting a problem that eventually led to the current form of the article, and for useful conversations while this paper was being prepared. ML has been supported by a Marie Curie Fellowship  of the European Community Programme ``Improving Human Potential'' under the contract number MERG-CT-2005-016163. IA has been supported by FAPESP Grant No.2007/50230--1.\\

\end{document}